\documentclass[11pt,reqno]{amsart}
\usepackage[a4paper,margin=29mm]{geometry}
\usepackage[T1]{fontenc}
\usepackage[utf8]{inputenc}
\usepackage{lmodern}
\usepackage{amsmath,amssymb,mathtools}
\usepackage{microtype}
\usepackage[hidelinks]{hyperref}

\hypersetup{
  pdftitle={The Cartan determinant conjecture for representation-finite Artin algebras},
  pdfauthor={Liang Chen and Qianqian Guo},
  pdfsubject={Integral Cartan determinants of representation-finite Artin algebras}
}
\newtheorem{theorem}{Theorem}[section]
\numberwithin{equation}{section}
\DeclareMathOperator{\End}{End}
\DeclareMathOperator{\Hom}{Hom}

\DeclareMathOperator{\add}{add}
\DeclareMathOperator{\ind}{ind}
\DeclareMathOperator{\proj}{proj}
\DeclareMathOperator{\modcat}{mod}
\DeclareMathOperator{\rad}{rad}
\DeclareMathOperator{\soc}{soc}
\DeclareMathOperator{\pd}{pd}
\DeclareMathOperator{\gldim}{gl.dim}
\DeclareMathOperator{\gr}{gr}
\DeclareMathOperator{\id}{id}
\newcommand{\op}{\mathrm{op}}
\newcommand{\Z}{\mathbb Z}
\newcommand{\N}{\mathbb N}
\newcommand{\sHom}{\underline{\Hom}}
\newcommand{\cC}{\mathcal C}
\newcommand{\cD}{\mathcal D}
\newcommand{\cE}{\mathcal E}
\newcommand{\mult}[2]{[#1:#2]_{\oplus}}

\title[Cartan determinants of Artin algebras]{The Cartan determinant conjecture for representation-finite Artin algebras}
\author{Liang Chen}
\address{School of Mathematical Sciences, Capital Normal University, Beijing 100048, China}
\email{2210501003@cnu.edu.cn}
\author[Qianqian Guo]{Qianqian Guo$^*$}
\thanks{$^*$Corresponding author: Qianqian Guo.}
\address{School of Basic Education, Beijing Institute of Economics and Management, Beijing 100026, China}
\email{qianqianguo@biem.edu.cn}
\date{October 18, 2026}
\subjclass[2020]{Primary 16G10, 16E10; Secondary 16G60, 16G70}
\keywords{Cartan determinant conjecture, Artin algebra, representation-finite algebra, Auslander algebra, Auslander--Reiten theory}

\begin{document}

\begin{abstract}
We prove that every representation-finite Artin algebra of finite global dimension has integral Cartan determinant one.
\end{abstract}

\maketitle

\section{Introduction}

Let $A$ be an Artin algebra. Choose representatives $S_1,\ldots,S_n$ of the isomorphism classes of simple left $A$-modules, and let $P_i\twoheadrightarrow S_i$ be their projective covers. The integral Cartan matrix of $A$ is
\begin{equation}\label{eq:cartan-definition}
 C_A=\bigl([P_j:S_i]\bigr)_{1\leq i,j\leq n},
\end{equation}
where $[M:S_i]$ denotes composition multiplicity. Its determinant is independent of the simultaneous ordering of the simple modules and their projective covers.

Finite global dimension implies $\det C_A=\pm1$. Indeed, the alternating sums of finite projective resolutions of the simple modules provide an integral inverse of $C_A$. For finite-dimensional algebras over algebraically closed fields, this classical unimodularity result goes back to Eilenberg \cite[Proposition~21]{Eilenberg}. The Cartan determinant conjecture asks whether the sign is always positive.

Enomoto \cite[Theorem~A]{Enomoto} proved the conjecture for representation-finite finite-dimensional algebras over algebraically closed fields. His proof uses the standard form of a representation-finite algebra to obtain a positive grading from its mesh category. We establish the corresponding statement for Artin algebras.

\begin{theorem}\label{thm:main}
Let $A$ be a representation-finite Artin algebra. If $\gldim A<\infty$, then
\[
 \det C_A=1.
\]
\end{theorem}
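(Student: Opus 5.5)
We reduce to the Auslander algebra of $A$ and show there that the Cartan matrix is conjugate to a unipotent matrix built from Auslander--Reiten sequences. Let $M_1,\dots,M_m$ be representatives of the isoclasses of indecomposable $A$-modules, $M=\bigoplus_k M_k$, and $\Gamma=\End_A(M)^{\op}$ the Auslander algebra; then $\gldim\Gamma\le 2$. The simple $A$-modules embed in $\{M_k\}$ via projective covers. Write $D=(\dim_{\End(M_i)/\rad}\Hom_A(P_i,M_k))$ suitably normalized so that $D_{ik}=[M_k:S_i]$. The Cartan matrix of $\Gamma$ has entries $c_{kl}=\ell_{E_k}\Hom_A(M_k,M_l)$, where $E_k=\End(M_k)/\rad\End(M_k)$ is a division ring, and the length is taken over $E_k$ (or its analogue; we will fix a consistent convention using lengths over the centre $R$ of the Artin algebra and divide by $\ell_R(E_k)$, which does not change the sign of the determinant since it rescales rows by positive numbers).

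\textbf{Step 1: $\det C_\Gamma=1$.} For each nonprojective indecomposable $M_l$ we use the almost split sequence $0\to\tau M_l\to E\to M_l\to0$, and for projective $M_l=P$ the sequence $0\to\rad P\to P$. Applying $\Hom_A(M_k,-)$ gives the Auslander--Reiten formula: the $E_k$-length of $\Hom(M_k,M_l)$ equals that of $\Hom(M_k,E)$ minus that of $\Hom(M_k,\tau M_l)$, plus $\delta_{kl}$. This says that $C_\Gamma\cdot U=I$ (up to the positive diagonal rescaling), where $U$ is the ``mesh matrix'' with $1$'s on the diagonal and, in column $l$, entries $-(\text{valuation of arrows into } M_l)$ and $+1$ at $\tau M_l$. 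Since $A$ is representation-finite, the AR quiver is finite and $\Hom$ between indecomposables is governed by paths in it. Because $\gldim A<\infty$, hence (or even regardless) we expect the ordering trick: there is no order making $U$ triangular in general, so instead we argue via $\gldim\Gamma\le2$: $C_\Gamma^{-1}$ is the matrix of the projective resolutions $0\to P_{\tau M}\to P_E\to P_M\to S_M\to0$, i.e. $U$. The main obstacle is proving $\det U=1$. The approach we will try first: replace the integral setup by the graded one — put a positive grading on $\Gamma$ by path length on the (valued) AR quiver via its mesh category, which for Artin algebras we obtain by passing to the associated graded $\gr\Gamma$ with respect to the radical filtration. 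The key fact is that $\gr\Gamma$ has the same Cartan matrix as $\Gamma$, and for representation-finite algebras $\gr\Gamma$ is the mesh algebra, which has finite global dimension with graded Cartan matrix $C(q)$ satisfying $C(q)U(q)=I$ where $U(q)=I-Bq+Tq^2$. Now $\det U(q)$ is a polynomial with constant term $1$, and $\det C(q)$ is a power series with constant term $1$ that is actually a polynomial in $q$ since $\gr\Gamma$ is finite-dimensional; the product equals $1$, so both are units of $\Z[q]$, hence constants equal to $1$. Specializing $q=1$ gives $\det C_\Gamma=1$.

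\textbf{Step 2: pass from $\Gamma$ to $A$.} $A$ is Morita equivalent to $e\Gamma e$ with $e$ the idempotent for the projective $A$-modules, and $\gldim A<\infty$. We will use the same graded argument directly on $A$ instead: filter $A$ by the radical, take $\gr A$; the graded Cartan matrix $C_{\gr A}(q)$ has constant term $I$ and is polynomial, and we need an inverse polynomial. We plan to obtain it from the graded Cartan matrix of $\gr\Gamma$ via the recollement given by $e$: $C_A$ is the Schur complement-type block, and $\det C_{\Gamma}=\det C_{A}\cdot\det C_{\Gamma/\Gamma e\Gamma}$-like identity holds when $\Gamma e\Gamma$ is a stratifying ideal, which follows from $\gldim A<\infty$ and $\gldim\Gamma<\infty$. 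Both factors are then $\pm1$ and $q$-graded versions with constant term $1$ force each to be $1$. This stratifying-ideal step, ensuring the graded determinants factor and remain polynomial units, is where we expect the main difficulty.
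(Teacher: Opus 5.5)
Your proposal has a genuine gap, and it sits in Step~2, where all the content of the theorem lies. Step~1 is sound in substance, though for Artin algebras the needed relation $C(q)U(q)=I$ for $\gr_{J(\Gamma)}\Gamma$ should come from exactness of the radical-layer sequences (Iyama's Lemma~2.5.1), not from a mesh-algebra identification; in any case $\det C_\Gamma=1$ is Zacharia's theorem for global dimension two. But Step~1 never uses $\gldim A<\infty$.

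The key claim of Step~2 is false: $\Gamma e\Gamma$ is not stratifying in general. As a module over $e\Gamma e\cong A$, one has $e\Gamma\cong\bigoplus_X X$, the sum of all indecomposables. As a right $A$-module, $\Gamma e\cong\bigoplus_Y\Hom_A(Y,A)$. Stratification requires $\operatorname{Tor}^A_n(\Gamma e,e\Gamma)=0$ for $n\geq1$, so every $\Hom_A(Y,A)$ would have to be projective. Dualizing a projective presentation shows that every second syzygy of a right $A$-module has the form $\Hom_A(Y,A)$, so this forces $\gldim A\leq2$. Thus for $\gldim A\geq3$, the case not already covered, the recollement is never stratifying. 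Even where a factorization $\det C_\Gamma=\det C_A\cdot\det C_{\Gamma/\Gamma e\Gamma}$ holds, it only moves the sign question to the stable Auslander algebra $\Gamma/\Gamma e\Gamma$.

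The graded fallback has the same hole. Saying that ``$q$-graded versions with constant term $1$ force each factor to be $1$'' presupposes a positively graded algebra of finite global dimension whose graded Cartan matrix specializes to $C_A$. Producing one is the entire difficulty. The radical-graded algebra $\gr_{J(A)}A$ has Cartan matrix $C_A$, but nothing makes its global dimension finite; if that were automatic, your argument would prove the Cartan determinant conjecture for every Artin algebra without representation-finiteness. Nor is the corner $e(\gr_{J(\Gamma)}\Gamma)e$ the same as $\gr_{J(A)}A$. The ideal $eJ(\Gamma)^2e$ contains radical maps between projectives that factor through nonprojective modules. For example, take the path algebra of $1\to2\to3$ modulo the square of its radical. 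The nonzero map between its two projectives of length two factors through the middle simple, although the square of the radical is zero.

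The missing idea is to use this corner $B=e\Lambda e$, with $\Lambda=\gr_{J(\Gamma)}\Gamma$, and to prove $\gldim B<\infty$ directly. The paper proceeds as follows:
\begin{itemize}
\item Iyama's realization theorem gives $\proj\Lambda\simeq\modcat B$, so $B$ is representation-finite with the same valued Auslander--Reiten data and Hom lengths as $A$; hence $C_B=C_A$.
\item Stable Hom lengths are recovered from this combinatorics via Iyama--Nakaoka--Palu.
\item A local formula for the nonprojective summands of $\Omega M$ then yields $\pd_B M^\sharp=\pd_A M$ by induction.
\end{itemize}
This is where $\gldim A<\infty$ is genuinely used, and your proposal has no substitute for it. Once it is available, the final step is exactly your Step~1 graded argument applied to $B$.
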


The proof uses the radical grading of the Auslander algebra of $A$. Iyama's results on strict $\tau$-categories \cite{Iyama} show that the resulting category of projective modules can again be realized as a module category. The objects corresponding to the projective $A$-modules then determine a positively graded Artin algebra $B$ with $C_B=C_A$.

The essential homological step is to prove that $B$ has finite global dimension. We use the stable Auslander--Reiten combinatorics of Iyama, Nakaoka, and Palu \cite{INP} to recover stable Hom lengths from data preserved by the grading. A local formula involving a left $\tau$-sequence then recovers the nonprojective direct summands of each syzygy. This allows induction on projective dimension. The proof concludes with the graded Cartan argument, whose Artinian form is recorded in \cite[Corollary~1.9]{Saorin}.

\subsection*{Conventions and notation}
All algebras are nonzero, associative, and unital. An Artin algebra is an algebra finite as a module over a commutative Artin ring $K$. Unless stated otherwise, modules are finitely generated left modules. We write $\modcat A$ for their category and $\proj A$ for its full subcategory of projective modules. The notation $\ind\cC$ denotes a set of representatives of the indecomposable objects of a Krull--Schmidt category $\cC$. For an object $G$, the category $\add G$ consists of the direct summands of finite direct sums of copies of $G$. Thus $A$ is representation-finite precisely when $\modcat A=\add G$ for some $G$.

We write $J(R)$ for the Jacobson radical of a ring $R$, $\ell_K V$ for the $K$-length of a finite $K$-module, and $\ell_A M$ for composition length as an $A$-module. For an indecomposable object $X$, the notation $\mult{M}{X}$ denotes its direct-summand multiplicity in $M$; it is distinct from the composition multiplicity in \eqref{eq:cartan-definition}. We use minimal projective resolutions and write $\Omega_A M$ for the kernel of a projective cover. Composition is written from right to left, so that $gf=g\circ f$.

\section{Auslander--Reiten data and radical layers}\label{sec:AR}

We recall the notation needed to apply the categorical results in \cite{Iyama,INP}. These results will be used only for categories with finitely many indecomposable objects and Hom spaces of finite $K$-length.

For a Krull--Schmidt category $\cC$, its radical is the ideal given by
\[
 \rad_{\cC}(X,Y)
 =\{f:X\to Y\mid 1_X-gf\text{ is invertible for every }g:Y\to X\}.
\]
Its powers are formed by finite sums of composites. If $\cC=\add G$, these powers are the corresponding components of the powers of $J(\End_{\cC}(G))$. In particular, they vanish in sufficiently high degree in the situation considered here. For an indecomposable object $X$, put
\begin{equation}\label{eq:weights}
 \Delta_X=\End_{\cC}(X)/J(\End_{\cC}(X)),
 \qquad w_X=\ell_K\Delta_X.
\end{equation}
The ring $\Delta_X$ is a division ring and $w_X$ is a positive integer.

We use the notation
\begin{equation}\label{eq:tau-sequences}
 \tau^+X\longrightarrow\theta^+X\longrightarrow X,
 \qquad
 X\longrightarrow\theta^-X\longrightarrow\tau^-X
\end{equation}
for the right and left $\tau$-sequences of \cite[Section~1.3]{Iyama}. The first map in a right $\tau$-sequence is a minimal weak kernel of the second; the dual condition holds for a left $\tau$-sequence. The maps have the almost split factorization properties on their respective sides. Strictness means that the first map of a right $\tau$-sequence is a monomorphism and the second map of a left $\tau$-sequence is an epimorphism, in the category in question. In particular, a strict left $\tau$-sequence gives, for every $N$, an exact sequence
\begin{equation}\label{eq:left-tau-hom}
 0\longrightarrow\Hom_{\cC}(\tau^-X,N)
 \longrightarrow\Hom_{\cC}(\theta^-X,N)
 \longrightarrow\rad_{\cC}(X,N)\longrightarrow0.
\end{equation}

For an Artin algebra $R$, the category $\modcat R$ is a strict $\tau$-category \cite[Section~1.3.2(1)]{Iyama}. At nonprojective and noninjective vertices, \eqref{eq:tau-sequences} comes from almost split short exact sequences. At the boundary, the sequences are
\begin{equation}\label{eq:boundary}
 0\longrightarrow J(R)P\longrightarrow P,
 \qquad
 I\longrightarrow I/\soc I\longrightarrow0
\end{equation}
for indecomposable projective $P$ and indecomposable injective $I$. Consequently,
\begin{equation}\label{eq:boundary-identification}
 \tau^+X=0\iff X\text{ is projective},
 \qquad
 \tau^-X=0\iff X\text{ is injective}.
\end{equation}

The functions $\theta^\pm$ and $\tau^\pm$ extend additively to the free abelian group on $\ind\cC$. In Iyama's terminology, a positive integer-valued function $l$ on $\ind\cC$ is additive if the defects
\[
 l(X)-l(\theta^\pm X)+l(\tau^\pm X)
\]
are nonnegative and vanish whenever $\tau^\pm X\ne0$. For $\cC=\modcat R$, the function $l(X)=\ell_R X$ satisfies
\begin{equation}\label{eq:length-defects}
 \begin{aligned}
 l(X)-l(\theta^+X)+l(\tau^+X)
 &=\begin{cases}1,&X\text{ is projective},\\0,&X\text{ is not projective},\end{cases}\\
 l(X)-l(\theta^-X)+l(\tau^-X)
 &=\begin{cases}1,&X\text{ is injective},\\0,&X\text{ is not injective}.
 \end{cases}
 \end{aligned}
\end{equation}
Here $X$ is indecomposable. The boundary values follow from \eqref{eq:boundary}, since the top of $P$ and the socle of $I$ are simple. The other values follow from additivity of length on almost split short exact sequences.

For an Artin algebra $R$, its stable category $\underline{\modcat}R$ has morphism spaces
\[
 \sHom_R(X,Y)=\Hom_R(X,Y)/\mathcal P_R(X,Y),
\]
where $\mathcal P_R(X,Y)$ consists of maps factoring through projective modules. Its indecomposable nonzero objects are represented by the nonprojective indecomposable $R$-modules. We shall use \cite[Theorem~7.15(2)]{INP}: this stable category is a $\tau$-category, and its valued $\tau$-quiver is determined by the valued Auslander--Reiten translation quiver of $\modcat R$, including its projective and injective vertices.

Here is the length calculation that will be needed. Write $\cE=\underline{\modcat}R$, and denote its right $\tau$-sequences by $\tau T\to\theta T\to T$. Let $K_0(\cE)=\Z[\ind\cE]$ be the split Grothendieck group, with positive monoid $K_0(\cE)^+=\N[\ind\cE]$. For $z\in K_0(\cE)$, let $z_+$ denote its positive part. Define maps $\Theta_r:K_0(\cE)^+\to K_0(\cE)^+$ by the recursion
\begin{equation}\label{eq:theta-recursion}
 \Theta_0=\id,\qquad \Theta_1=\theta,\qquad
 \Theta_r(T)=\bigl(\theta\Theta_{r-1}(T)-\tau\Theta_{r-2}(T)\bigr)_+
 \quad(r\geq2).
\end{equation}
These functions depend only on the valued $\tau$-quiver \cite[Definition~7.6]{INP}. If $R$ is representation-finite, the radical of $\cE$ is nilpotent, and \cite[Corollary~7.21(3)]{INP} gives
\begin{equation}\label{eq:stable-length}
 \operatorname{length}\bigl(\cE(Y,M)_{\End_{\cE}(Y)}\bigr)
 =\sum_{r\geq0}\mult{\Theta_r(M)}{Y}
 \qquad(Y\in\ind\cE).
\end{equation}
The length on the left is the length of the right endomorphism-ring module given by precomposition. The sum is finite. All the categorical results just recalled apply to Artin algebras.

\section{Proof of the theorem}

\begin{proof}[Proof of Theorem~\ref{thm:main}]
Fix a commutative Artin base ring $K$ for $A$, and put $d=\gldim A$. A Morita equivalence preserves representation-finiteness, projective dimensions, projective covers, and composition multiplicities. We may therefore assume that $A$ is basic. Let
\[
 G=\bigoplus_{X\in\ind(\modcat A)}X,
 \qquad \Gamma=\End_A(G)^{\op}.
\]
The functor $\Hom_A(G,-)$ identifies $\modcat A=\add G$ with $\proj\Gamma$. For $X\in\ind(\modcat A)$, let $\varepsilon_X\in\Gamma$ be the primitive idempotent corresponding to the summand $X$ of $G$, and set
\[
 Q_X=\Hom_A(G,X)\simeq\Gamma\varepsilon_X.
\]

\smallskip
\noindent\emph{Step 1: Radical grading and an abelian realization.}
Put
\[
 I=J(\Gamma),\qquad
 \Lambda=\gr_I\Gamma=\bigoplus_{r\geq0}I^r/I^{r+1},
 \qquad \cD=\proj\Lambda.
\]
We use the same symbol $\varepsilon_X$ for its degree-zero image in $\Lambda_0=\Gamma/I$. Since $\proj\Gamma$ is equivalent to the strict $\tau$-category $\modcat A$, the algebra $\Gamma$ is a strict $\tau$-algebra. By \cite[Section~2.5]{Iyama}, the same holds for $\Lambda$.

We explain which numerical data are preserved. The right $\tau$-sequence ending at $X$ gives a minimal projective resolution of the corresponding simple $\Gamma$-module $L_X$:
\begin{equation}\label{eq:simple-Gamma-resolution}
 0\longrightarrow Q_{\tau^+X}
 \longrightarrow Q_{\theta^+X}
 \longrightarrow Q_X\longrightarrow L_X\longrightarrow0,
\end{equation}
where zero terms are allowed. Iyama's radical-layer statement \cite[Lemma~2.5.1]{Iyama} gives exact sequences
\begin{equation}\label{eq:Gamma-layers}
 0\longrightarrow I^{r-1}Q_{\tau^+X}
 \longrightarrow I^rQ_{\theta^+X}
 \longrightarrow I^{r+1}Q_X\longrightarrow0
 \qquad(r\geq0),
\end{equation}
with $I^{-1}Q=Q$. Use the shift convention $V\langle a\rangle_r=V_{r-a}$. Taking the quotients of consecutive sequences gives the graded projective resolution
\begin{equation}\label{eq:graded-simple-resolution}
 \begin{aligned}
 0\longrightarrow(\gr_I Q_{\tau^+X})\langle2\rangle
 &\longrightarrow(\gr_I Q_{\theta^+X})\langle1\rangle\\
 &\longrightarrow\gr_I Q_X\longrightarrow L_X\longrightarrow0.
 \end{aligned}
\end{equation}
Since $\Lambda_0=\Gamma/I$ is semisimple and $\Lambda_{>0}$ is nilpotent, $J(\Lambda)=\Lambda_{>0}$. The maps in \eqref{eq:graded-simple-resolution} have images in the radicals of their targets, so this resolution remains minimal after forgetting the grading. Thus the correspondence
\[
 X\longleftrightarrow Q_X^\sharp:=\Lambda\varepsilon_X
\]
preserves the objects $\theta^+X$ and $\tau^+X$, including their indecomposable multiplicities. Since $\gr_{J(\Gamma^{\op})}\Gamma^{\op}\simeq\Lambda^{\op}$, projective duality $\Hom_\Gamma(-,\Gamma)$ allows the same argument for $\Gamma^{\op}$ to be applied to the left $\tau$-sequences. Thus $\theta^-X$ and $\tau^-X$ are preserved as well. In particular, both boundary sets in \eqref{eq:boundary-identification} and the valuations of the Auslander--Reiten quiver are preserved.

Transfer the integers $l(X)=\ell_A X$ to the objects $Q_X^\sharp$. By \eqref{eq:length-defects}, the transferred function is additive, and its left defect is strictly positive at every vertex with $\tau^-=0$. In the notation of \cite[Section~1.3.1]{Iyama}, this says
\[
 l^-=\ind^-_1\cD.
\]
Iyama's realization criterion \cite[Theorem~3.3(5)$\Rightarrow$(3)]{Iyama} now gives an equivalence
\begin{equation}\label{eq:abstract-realization}
 \cD\simeq\modcat B'
\end{equation}
for an Artin algebra $B'$. In particular, $\cD$ is an abelian category, with the abelian structure transported from $\modcat B'$. This is the point where we use the full length function of $\modcat A$, in addition to strictness of the graded $\tau$-category.

\smallskip
\noindent\emph{Step 2: The projective corner and its $K$-linear realization.}
Let
\[
 e=\sum_{i=1}^n\varepsilon_{P_i},
 \qquad Q=\Lambda e,
 \qquad B=e\Lambda e.
\]
The objects $Q_{P_i}^\sharp$ are exactly the indecomposable objects of $\cD$ with $\tau^+=0$. Under \eqref{eq:abstract-realization}, these become the indecomposable projective $B'$-modules. Hence $Q$ becomes a projective generator. Morita theory therefore shows that the canonical Yoneda functor
\begin{equation}\label{eq:Yoneda}
 H=\Hom_{\cD}(Q,-):\cD\xrightarrow{\ \sim\ }
 \modcat\End_{\cD}(Q)^{\op}=\modcat B
\end{equation}
is an equivalence. Define
\begin{equation}\label{eq:sharp-objects}
 X^\sharp=H(Q_X^\sharp)\simeq e\Lambda\varepsilon_X,
\end{equation}
and extend this notation to finite direct sums. These objects exhaust the indecomposable $B$-modules. In particular, $B$ is representation-finite, and
\begin{equation}\label{eq:projectives-sharp}
 P_i^\sharp=B\varepsilon_{P_i}
\end{equation}
are its indecomposable projective modules. The correspondence $X\leftrightarrow X^\sharp$ preserves the two-sided $\tau$-data. It is an object correspondence; an equivalence between $\modcat A$ and $\modcat B$ is not asserted.

We keep track of the base ring explicitly. The ideal $J(K)\Gamma$ is nilpotent, so $J(K)\Gamma\subseteq I$. More precisely, $J(K)I^r\subseteq I^{r+1}$ for every $r\geq0$, so $J(K)$ annihilates each homogeneous layer $I^r/I^{r+1}$. Consequently, the natural central $K$-action on $\Lambda$ factors through $K/J(K)$, and on each homogeneous layer it is the original action on $I^r/I^{r+1}$. Thus $\Lambda$ and $B$ are Artin $K$-algebras. The functor in \eqref{eq:Yoneda} is $K$-linear, independently of any choice of linear structure on \eqref{eq:abstract-realization}.

For indecomposable $X,Y$, full faithfulness in \eqref{eq:Yoneda} gives $K$-module isomorphisms
\begin{equation}\label{eq:Hom-layers}
 \begin{aligned}
 \Hom_B(X^\sharp,Y^\sharp)
 &\simeq\Hom_\Lambda(\Lambda\varepsilon_X,\Lambda\varepsilon_Y)\\
 &\simeq\varepsilon_X\Lambda\varepsilon_Y
 =\bigoplus_{r\geq0}
 \frac{\varepsilon_X I^r\varepsilon_Y}{\varepsilon_X I^{r+1}\varepsilon_Y}.
 \end{aligned}
\end{equation}
On the other hand, $\varepsilon_X\Gamma\varepsilon_Y\simeq\Hom_A(X,Y)$ as $K$-modules. Additivity of length along the finite radical filtration yields
\begin{equation}\label{eq:Hom-length-preserved}
 \ell_K\Hom_B(X^\sharp,Y^\sharp)=\ell_K\Hom_A(X,Y).
\end{equation}
The degree-zero quotient also gives, as $K$-algebras,
\begin{equation}\label{eq:division-preserved}
 \Delta_{X^\sharp}\simeq\Delta_X,
 \qquad w_{X^\sharp}=w_X.
\end{equation}
Indeed, $\Gamma/I\simeq\prod_X\Delta_X^{\op}$, and the same division factors occur in $\Lambda/J(\Lambda)=\Lambda_0$. Since $\End_\Lambda(\Lambda\varepsilon_X)\simeq(\varepsilon_X\Lambda\varepsilon_X)^{\op}$, its residue division ring is $\Delta_X$. These comparisons use lengths over $K$, rather than dimensions over a splitting field.

\smallskip
\noindent\emph{Step 3: Preservation of stable Hom lengths.}
Both $\modcat A$ and $\modcat B$ have almost split sequences, enough projectives, and source morphisms. Their valued Auslander--Reiten translation quivers, including their projective and injective vertices, agree by Steps~1 and~2. Therefore \cite[Theorem~7.15(2)]{INP} identifies the valued $\tau$-data of their stable categories. This invokes the full stable-category construction: when an almost split sequence ending at $X$ has projective middle term, its image has right $\tau$-sequence $0\to0\to X$, as in \cite[Lemma~7.17(2)]{INP}.

For a nonprojective indecomposable $Y$, an endomorphism of $Y$ factoring through projectives cannot be invertible; otherwise $Y$ would be a direct summand of a projective. Such endomorphisms consequently lie in $J(\End_A(Y))$. Thus $\End_{\underline{\modcat}A}(Y)$ has residue division ring $\Delta_Y$, and the analogous assertion holds for $B$. Every composition factor of a finite module over either of these local endomorphism rings has $K$-length $w_Y$. Multiplying \eqref{eq:stable-length} by $w_Y$, and using the matching recursions \eqref{eq:theta-recursion} together with \eqref{eq:division-preserved}, gives
\begin{equation}\label{eq:stable-Hom-preserved}
 \ell_K\sHom_A(Y,M)
 =\ell_K\sHom_B(Y^\sharp,M^\sharp)
 \qquad(Y,M\in\modcat A).
\end{equation}
For projective $Y$ both sides are zero, and the general case follows by direct sums. Nilpotence of the stable radicals ensures that all the length sums used here are finite.

\smallskip
\noindent\emph{Step 4: Recovery of nonprojective syzygies.}
For an indecomposable $X$ and an integer-valued function $f$ on objects that is additive under finite direct sums, put
\[
 \delta_X f=f(X)-f(\theta^-X)+f(\tau^-X).
\]
Write $h_N(Y)=\ell_K\Hom_A(Y,N)$. Since
\[
 \ell_K\bigl(\Hom_A(X,N)/\rad_{\modcat A}(X,N)\bigr)
 =w_X\mult{N}{X},
\]
the exact sequence \eqref{eq:left-tau-hom} gives
\begin{equation}\label{eq:local-multiplicity}
 \delta_Xh_N=w_X\mult{N}{X}.
\end{equation}
The same calculation holds over $B$.

Take a projective cover $P_M\twoheadrightarrow M$. Applying $\Hom_A(Y,-)$ to its kernel sequence gives
\begin{equation}\label{eq:cover-stable-sequence}
 \begin{aligned}
 0\longrightarrow\Hom_A(Y,\Omega_A M)
 &\longrightarrow\Hom_A(Y,P_M)\\
 &\longrightarrow\Hom_A(Y,M)
 \longrightarrow\sHom_A(Y,M)\longrightarrow0.
 \end{aligned}
\end{equation}
For the last identification, every map from a projective module to $M$ lifts through $P_M\twoheadrightarrow M$. Hence the image of the middle map is exactly the $K$-submodule of maps factoring through projectives.

Set $u_M(Y)=\ell_K\sHom_A(Y,M)$. Taking lengths in \eqref{eq:cover-stable-sequence} yields
\[
 h_{\Omega_A M}-h_{P_M}=u_M-h_M.
\]
Apply $\delta_X$ and use \eqref{eq:local-multiplicity}. If $X$ is nonprojective, then $\mult{P_M}{X}=0$, and we obtain
\begin{equation}\label{eq:syzygy-formula}
 \mult{\Omega_A M}{X}
 =-\mult{M}{X}
 +\frac{u_M(X)-u_M(\theta^-X)+u_M(\tau^-X)}{w_X}.
\end{equation}
The right side is preserved by \eqref{eq:division-preserved}, \eqref{eq:stable-Hom-preserved}, and preservation of the left $\tau$-data. Comparing with the same formula over $B$ gives
\begin{equation}\label{eq:syzygy-preserved}
 \mult{\Omega_A M}{X}
 =\mult{\Omega_B M^\sharp}{X^\sharp}
 \qquad(X\text{ nonprojective}).
\end{equation}

We now induct on $\pd_A M$ to obtain
\begin{equation}\label{eq:pd-preserved}
 \pd_B M^\sharp=\pd_A M
 \qquad(0\ne M\in\modcat A).
\end{equation}
It suffices to treat indecomposable $M$, since projective dimension takes the maximum on a finite direct sum. The case $\pd_A M=0$ follows from the correspondence of projective objects. If $\pd_A M=1$, then \eqref{eq:syzygy-preserved} implies that $\Omega_B M^\sharp$ is projective. It is nonzero, since $M^\sharp$ is nonprojective, and consequently $\pd_B M^\sharp=1$. Suppose now that $\pd_A M=m\geq2$. Then $\pd_A\Omega_A M=m-1\geq1$. Thus the nonprojective indecomposable summands of $\Omega_A M$ form a nonempty collection, and the maximum of their projective dimensions is $m-1$. The induction hypothesis and \eqref{eq:syzygy-preserved} give the same maximum for the nonprojective summands of $\Omega_B M^\sharp$. Projective summands do not affect this maximum. Hence $\pd_B\Omega_B M^\sharp=m-1$ and $\pd_B M^\sharp=m$, proving \eqref{eq:pd-preserved}.

Every simple $B$-module is one of the modules $M^\sharp$, and hence has projective dimension at most $d$. This bounds the global dimension on all modules as well. Indeed, every $B$-module has a finite radical filtration whose factors are semisimple, and arbitrary direct sums of simple modules of projective dimension at most $d$ have the same bound. Successive extensions preserve it. We conclude that
\begin{equation}\label{eq:B-finite-gldim}
 \gldim B\leq d<\infty.
\end{equation}

\smallskip
\noindent\emph{Step 5: Equality of the integral Cartan matrices.}
Let $T_i$ be the simple top of $P_i^\sharp$. Exactness of $\Hom_A(P_i,-)$ and a composition series give
\begin{equation}\label{eq:Cartan-length}
 \ell_K\Hom_A(P_i,P_j)
 =[P_j:S_i]\,\ell_K\End_A(S_i).
\end{equation}
In fact, $\Hom_A(P_i,S_l)=0$ for $l\ne i$, while $\Hom_A(P_i,S_i)\simeq\End_A(S_i)$. The natural map $\End_A(P_i)\to\End_A(S_i)$ is surjective by projectivity. Its kernel is $J(\End_A(P_i))$, since $\End_A(P_i)$ is local and $\End_A(S_i)$ is a division ring. Thus
\[
 \ell_K\End_A(S_i)=w_{P_i}.
\]
The analogous assertions hold for $P_i^\sharp$ and $T_i$. Using \eqref{eq:Hom-length-preserved} and \eqref{eq:division-preserved} in \eqref{eq:Cartan-length} now gives
\[
 [P_j:S_i]
 =\frac{\ell_K\Hom_A(P_i,P_j)}{w_{P_i}}
 =\frac{\ell_K\Hom_B(P_i^\sharp,P_j^\sharp)}{w_{P_i^\sharp}}
 =[P_j^\sharp:T_i].
\]
Therefore
\begin{equation}\label{eq:Cartan-preserved}
 C_A=C_B.
\end{equation}

\smallskip
\noindent\emph{Step 6: The graded Cartan calculation.}
The grading of $\Lambda$ induces
\[
 B=\bigoplus_{r\geq0}B_r,
 \qquad B_r=e(I^r/I^{r+1})e.
\]
The algebra $B_0=e(\Gamma/I)e$ is semisimple. Since the grading is finite, $B_{>0}$ is nilpotent, and hence
\[
 J(B)=B_{>0},\qquad B_0\simeq B/J(B).
\]
The idempotents $\varepsilon_{P_i}$ lie in degree zero. Give $P_i^\sharp=B\varepsilon_{P_i}$ its induced grading and put $T_i$ in degree zero. Then $T_i\simeq B_0\varepsilon_{P_i}$.

For a finite graded $B$-module $V$, define the column vector
\[
 h(V;t)_i=\sum_{r\in\Z}[V_r:T_i]_{B_0}\,t^r.
\]
It is additive on short exact sequences of graded modules with degree-zero maps. The graded Cartan matrix is
\begin{equation}\label{eq:graded-Cartan}
 C_B(t)_{ij}=h(P_j^\sharp;t)_i
 =\sum_{r\geq0}[(P_j^\sharp)_r:T_i]_{B_0}\,t^r.
\end{equation}
It belongs to $\operatorname{Mat}_n(\Z[t])$ and satisfies
\begin{equation}\label{eq:graded-specializations}
 C_B(0)=I_n,\qquad C_B(1)=C_B.
\end{equation}
For the second equality, restrict a $B$-composition series to $B_0$. Every simple $B$-module remains the corresponding simple $B_0$-module, so the multiplicities add as asserted.

For completeness, we explain the existence of finite graded projective resolutions in this Artinian setting. If $V$ is a finite graded $B$-module, let $\overline V=V/J(B)V$. Semisimplicity of $B_0$ provides a degree-preserving $B_0$-linear section $s:\overline V\to V$. The map
\[
 B\otimes_{B_0}\overline V\longrightarrow V,
 \qquad b\otimes v\longmapsto b\,s(v),
\]
is a graded projective cover: its source is a finite direct sum of graded shifts of the $P_i^\sharp$, and the induced map on tops is an isomorphism. Nakayama's lemma makes the map surjective, and the isomorphism on tops puts its kernel in the radical of its source. Thus its underlying map is also an ordinary projective cover.

Iterating gives a graded resolution whose underlying resolution is minimal. It terminates by \eqref{eq:B-finite-gldim}. Indeed, once a syzygy is projective, its projective cover is an isomorphism: the cover splits, and a direct summand contained in the radical of a finite projective module must be zero by Nakayama's lemma.

In a resolution of $T_j$, only nonnegative shifts occur, since $T_j$ starts in degree zero and each subsequent kernel lies in the positive-degree radical of the preceding projective term. Write its finite graded projective resolution as
\[
 0\longrightarrow R_j^{d_j}\longrightarrow\cdots
 \longrightarrow R_j^0\longrightarrow T_j\longrightarrow0,
\]
with
\[
 R_j^q\simeq\bigoplus_{i=1}^n\bigoplus_{a\geq0}
 \bigl(P_i^\sharp\langle a\rangle\bigr)^{\oplus m_{i,a}^{q,j}}.
\]
Define
\[
 E(t)_{ij}=\sum_{q=0}^{d_j}\sum_{a\geq0}(-1)^q m_{i,a}^{q,j}t^a.
\]
All these sums are finite. Taking alternating sums of graded multiplicity vectors gives
\[
 C_B(t)E(t)=I_n.
\]
Thus $\det C_B(t)$ is a unit of $\Z[t]$, and its value at $t=0$ is one. It follows that $\det C_B(t)=1$. Evaluating at $t=1$ and applying \eqref{eq:Cartan-preserved} proves
\[
 \det C_A=\det C_B=1.
\]
\end{proof}

\bigskip
\noindent\textbf{Acknowledgements.} ChatGPT assisted with this work.

\end{document}